\documentclass{amsart}

\usepackage{amsmath, amsthm, amssymb}
\usepackage{epsfig}
\usepackage{graphicx}
\usepackage{array}
\usepackage{amsmath}
\usepackage{amsfonts}
\usepackage{amssymb}%
\usepackage{graphicx}
\usepackage{amssymb}
\usepackage{amsmath,amsxtra,amssymb}

\oddsidemargin 0.5 cm

\evensidemargin 0.5 cm

\textwidth 16.1cm

\textheight 8.453in

\setlength{\topmargin}{.01in}

\numberwithin{equation}{section}


\newtheorem{theorem}{Theorem}
\newtheorem{lemma}[theorem]{Lemma}
\newtheorem{remark}[theorem]{Remark}
\newtheorem{corollary}[theorem]{Corollary}
\newtheorem{proposition}[theorem]{Proposition}

\newtheorem{example}[theorem]{Example}



\newcommand{\field}[1]{\mathbb{#1}}
\newcommand{\C}{{\field{C}}}

\newcommand{\ra}{\rightarrow}
\newcommand{\id}{{\iota}}
\newcommand{\ot}{{\otimes}}
\newcommand{\om}{{\omega}}
\newcommand{\tp}{{\widehat{\otimes}}}
\newcommand{\vtp}{{\overline{\otimes}}}

\newcommand{\h}{{\mathcal H}}

\newcommand{\M}{{\mathcal{M}}}
\newcommand{\m}{{\mathfrak{m}}}
\newcommand{\n}{{\mathfrak{n}}}

\newcommand{\B}{{\mathcal{B}}}

\newcommand{\fee}{{\varphi}}

\newcommand{\LL}{{\mathcal{L}^{\infty}(\G)}}
\newcommand{\LO}{{\mathcal{L}^{1}(\G)}}
\newcommand{\LT}{{\mathcal{L}^{2}(\G)}}
\newcommand{\MG}{{\mathcal{M}(\G)}}

\newcommand{\CZ}{{\mathcal{C}_0(\G)}}

\newcommand{\LLL}{{\mathcal{L}^{\infty}(\hat\G)}}

\newcommand{\G}{\mathbb G}
\newcommand{\mcb}{\mathcal{C}^l_{cb}(\LO)}

\title [Left centralizers and actions of l.c.q. groups]
{Representation of left centralizers for actions of locally compact quantum groups}
\author{Mehrdad Kalantar}
\address{ School of Mathematics and Statistics,
Carleton University, Ottawa, ON, Canada}
\email{mkalanta@math.carleton.ca}



\subjclass[2000]{Primary 46L89, 47L10; Secondary 22D25, 43A22, 46L07, 46L10.}

\begin{document}

\begin{abstract}
We generalize the representation theorem of Junge, Neufang and Ruan in \cite{JNR},
and some of the important results which were used in its proof,
to the case of actions of locally compact quantum groups on von Neumann algebras.
\end{abstract}
\maketitle

\section{Introduction and Preliminaries}
In \cite{JNR} the authors
have investigated the quantum group analogue of the class of completely bounded
multiplier algebras which play an important role in Fourier analysis over groups, by means of a
representation theorem.
They, indeed, generalized and unified earlier results in the commutative \cite{N}
and co-commutative \cite{NRS} settings.
Beside its elegance, their representation theorem has proven to have very important applications.
In particular, this result enables them
to express quantum group duality precisely in terms of a commutation relation.
Moreover, it provides a systematic way of producing
interesting and non-trivial quantum channels, and more surprisingly, to explicitly
calculate some important quantities associated to them,
such as the c.b. minimal entropy \cite{JNR-QIT}.

All this has motivated us to investigate this representation theorem further.
We see that the main point behind these results is the fact that a locally compact (quantum)
group acts on itself, by left (right) multiplication.
Therefore, we consider the comultiplication as a left action of a locally compact quantum group $\G$
on itself, then replace $\LL$ by a von Neumann algebra on which $\G$ acts, and discuss the
representation results.

First, let us recall some definitions and preliminary results that we will be using in this paper.
For more details on locally compact quantum groups we refer the reader to \cite{KV}.

A {\it locally compact quantum group} $\G$ is a
quadruple $(\LL, \Gamma, \varphi, \psi)$, where $\LL$ is a
von Neumann algebra,
$\Gamma: \LL\to \LL \vtp \LL$
is a co-associative co-multiplication,
and $\varphi$ and  $\psi$ are  (normal faithful semi-finite) left and right
Haar weights on $\LL$, respectively.

The \emph{reduced quantum group $C^*$-algebra} associated to $\G$ is denoted by $\CZ$, which
is a weak$^*$ dense $C^*$-subalgebra of $\LL$.
Let $\MG$ denote the dual space $\CZ^{*}$.
There exists  a completely contractive multiplication on $\MG$ given by
the convolution
\[
\star : \MG\tp \MG\ni \mu\ot \nu \,\longmapsto\, \mu \star \nu 
= \mu (\id\otimes \nu)\Gamma = \nu (\mu \otimes \id)\Gamma
\in \MG
\]
such that  $\MG$ contains $\LO:= \LL_*$ as a norm closed two-sided ideal.
Therefore, for each $\om\in\MG$, we obtain a completely bounded map
\begin{equation}
\label {F.multi}
\mathfrak{m}_{\omega}(f) = \om \star f
\end{equation}
on $\LO$ with $\|\mathfrak{m}_{\om}\|_{cb}\le \|\omega\|$.

A linear map $\m$ on $\LO$ is called a {\it left centralizer}
of $\LO$ if it satisfies
\[
\m(f\star g) \,=\, \m(f)\star g
\]
for all $f, g \in \LO$.
We denote by $\mcb$ the space of all
completely bounded left centralizers of $\LO$.

It turns out that the map $\mathfrak{m}_{\omega}$
in (\ref{F.multi}) is a left centralizer of $\LO$.
Therefore, we obtain the natural inclusion
\begin{equation}
\label{F.inclusion}
\LO \hookrightarrow \MG \hookrightarrow \mcb,
\end{equation}
where the first inclusion is completely isometric homomorphisms, and
the second one is a completely contractive homomorphism.
These algebras are typically not equal.
We have
\begin{equation}
\label {F.equi}
\MG =  \mcb
\end{equation}
if and only if $\G$ is co-amenable, i.e., $\LO$  has a contractive (or bounded)
approximate identity.

Let $N$ be a von Neumann algebra, a left action of $\G$ on $N$ is a $*$-isomorphism
$\alpha: N \ra \LL\vtp N$ such that
\[
(\Gamma\otimes\id)\,\alpha\,=\,(\id\otimes\alpha)\,\alpha\,.
\]
The action $\alpha$ is called ergodic if the fixed point algebra
$$N^\alpha\,:=\,\{\,x\,\in\,N\,:\,\alpha(x)\,=\,1\otimes x\,\}$$
is trivial.
We say that $\alpha$ is {\it faithful} if
$$\big\{\,(\iota\otimes \omega)\,\alpha(x)\, :\,  \omega\,\in \,N_*, \, x\,\in\, N\,\big\}$$
spans a weak* dense subspace of $\LL$, and
we say that $\alpha$ is {\it strongly faithful} if
$$\big\{\,(\iota\otimes \omega)\,\alpha(x)\, :\,  \omega\,\in \,N_*, \, x\,\in\, N,\,\|\om\|=\|x\|=1\,\big\}$$
is weak* dense in the unit ball of $\LL$.
$\alpha$ is said to be {\it co-faithful} if the span of
$$\big\{\,(f\otimes\iota)\,\alpha(x)\, :\,  f\,\in\, \LO,\, x\,\in\, N\,\big\}$$
is weak* dense in $N$.

\section{The Representation Theorem}
Consider an action $\alpha$ of a locally compact quantum group $\G$
on a von Neumann algebra $N$.
Let $\theta$ be an normal semi-finite faithful weight on $N$, and let $K$ be the $GNS$ Hilbert space of $\theta$.
It is proved in \cite[Theorem 4.4]{V} that $\alpha$
is implemented by a unitary $U_\alpha\in \LL\vtp B(K)$, that is
\begin{equation}\label{1212}
\alpha(x)\, = \,U_\alpha\,(1\otimes x)\,U_\alpha^* \ \ \ (x\in N)\,,
\end{equation}
such that this unitary is a corepresentation of $\G$, i.e.,
\begin{equation}\label{4}
(\Gamma\otimes\iota)\, U_\alpha\, = \,(U_\alpha)_{23}\,(U_\alpha)_{13}\,.
\end{equation}
By \cite[Proposition 3.7]{V}, the unitary $U_\alpha$ also satisfies the following
\begin{equation}\label{00001}
U_\alpha\,(\hat J\otimes J_\theta)\, = \,(\hat J\otimes J_\theta)\,U_\alpha^*\,,
\end{equation}
where $\hat J$ and $J_\theta$ are the modular conjugations of the dual Haar weight $\hat \fee$, and $\theta$,
respectively.
Using (\ref{1212}), we can extend $\alpha$
to an action
$$\tilde\alpha : B(K)\,\rightarrow\,\LL\,\vtp\, B(K)$$
of $\G$ on $B(K)$.
Then it is obvious that $U_\alpha\in \LL\vtp (B(K)^{\tilde\alpha})'$.
Now, for $f\in\LO$, let $\lambda_\alpha(f) := (f\otimes\iota)U_\alpha\in B(K)$, and define
\[
\hat{N}\, :=\, \{\,\lambda_\alpha(f)\, : \,f\in \LO\,\}''\,.
\]
Then one can easily conclude the following.
\begin{proposition}\label{3}
We have $\hat N' = B(K)^{\tilde\alpha}$, and therefore $U_\alpha\in \LL\vtp\hat N$.
\end{proposition}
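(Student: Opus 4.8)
The plan is to establish the operator equation $\hat N' = B(K)^{\tilde\alpha}$ directly from the definitions, and then read off the membership of $U_\alpha$ by taking commutants. First I would unwind both sides. By definition $\hat N' = \{\lambda_\alpha(f) : f \in \LO\}'$, so $T \in \hat N'$ precisely when $T$ commutes with every $\lambda_\alpha(f) = (f \otimes \iota)U_\alpha$. On the other side, since $\tilde\alpha(T) = U_\alpha(1 \otimes T)U_\alpha^*$ and $U_\alpha$ is unitary, $T$ lies in $B(K)^{\tilde\alpha}$ exactly when $1 \otimes T$ commutes with $U_\alpha$. Thus the proposition reduces to the assertion that $T$ commutes with all the slices $(f\otimes\iota)U_\alpha$ if and only if $1 \otimes T$ commutes with $U_\alpha$ itself.

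The key computational tool is the pair of slice identities $(f\otimes\iota)\big((1\otimes T)X\big) = T\,(f\otimes\iota)(X)$ and $(f\otimes\iota)\big(X(1\otimes T)\big) = (f\otimes\iota)(X)\,T$, valid for all $X \in \LL\vtp B(K)$, $T \in B(K)$ and $f \in \LO$. For the forward implication, if $1 \otimes T$ commutes with $U_\alpha$ I apply $(f\otimes\iota)$ to the identity $(1\otimes T)U_\alpha = U_\alpha(1\otimes T)$ and obtain $T\lambda_\alpha(f) = \lambda_\alpha(f)T$ for every $f$, so $T \in \hat N'$. For the converse I run the same computation backwards: commuting with each $\lambda_\alpha(f)$ yields $(f\otimes\iota)\big((1\otimes T)U_\alpha\big) = (f\otimes\iota)\big(U_\alpha(1\otimes T)\big)$ for all $f \in \LO$.

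To finish the converse I need to cancel the slice maps, and this is the one point deserving care. I would invoke the fact that the normal slices $(f\otimes\iota)$ with $f \in \LO = \LL_*$ separate the points of $\LL\vtp B(K)$: if $(f\otimes\iota)(X) = (f\otimes\iota)(Y)$ for all $f$, then composing with an arbitrary $\omega \in B(K)_*$ gives $(f\otimes\omega)(X) = (f\otimes\omega)(Y)$, and the functionals $f\otimes\omega$ are total on the von Neumann tensor product. Hence $(1\otimes T)U_\alpha = U_\alpha(1\otimes T)$, i.e. $T \in B(K)^{\tilde\alpha}$, which completes the equality $\hat N' = B(K)^{\tilde\alpha}$.

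Finally, taking commutants and using the bicommutant theorem gives $(B(K)^{\tilde\alpha})' = \hat N'' = \hat N$, since $\hat N$ is by construction a von Neumann algebra. The membership $U_\alpha \in \LL\vtp (B(K)^{\tilde\alpha})'$ was already recorded in the excerpt, so substituting yields $U_\alpha \in \LL\vtp\hat N$, as claimed. The only genuinely non-formal ingredient is the separation property of the slice maps used to pass from equality of all $(f\otimes\iota)$-slices to equality of the operators; everything else is a direct manipulation of the defining relations.
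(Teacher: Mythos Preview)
Your argument is correct and is precisely the routine verification the paper has in mind when it says ``one can easily conclude the following'': the paper records that $U_\alpha\in \LL\vtp(B(K)^{\tilde\alpha})'$ and defines $\hat N$ via the slices $\lambda_\alpha(f)=(f\otimes\iota)U_\alpha$, and your slice-map computation together with the separation property is exactly the standard way to identify $\{\lambda_\alpha(f)\}'$ with the relative commutant of $U_\alpha$ in $1\otimes B(K)$, i.e., with $B(K)^{\tilde\alpha}$. There is no substantive difference in approach; you have simply written out the details the paper omits.
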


If $\m\in\mcb$ is a left centralizer, then $\m^*$ defines a c.b. normal map on $\LL$.
One of the main points in the representation theorem of \cite{JNR} was to find a canonical
extension of $\m^*$ to $\mathcal{CB}^\sigma(B(\LT))$.
But the first obstacle that arises in the attempt of generalizing the representation theorem
to the case of $\G$-spaces is that in this setting there is not, in general,
any canonical way to assign to $\m\in\mcb$ a c.b. normal map on $N$;
if $\G$ is commutative, i.e., a locally compact group $G$,
then for $\mu\in \M(G) = \mathcal{C}_{cb}^l(\mathcal{L}^1(G))$,
and an action $\alpha:G\to Aut(N)$
we can define the map $\Theta_\alpha(\mu)$ on $N$ by
\begin{equation}\label{000}
\Theta_\alpha(\mu)\,(x)\,=\, \int_G\, \alpha_{s^{-1}}(x)\,d\mu(s)\ \ \
(x\in N)\,.
\end{equation}
Also, in the case of actions of locally compact quantum groups $\G$,
for $f\in\LO$ we can define $\Theta_\alpha(f)$ on $N$ by
\begin{equation}\label{0000}
\Theta_\alpha(f)\,=\, (f\otimes\id)\,\alpha\,.
\end{equation}
The formula (\ref{0000}) coincides with (\ref{000}) when $\G=G$,
and both coincide with $\m^*$, when $N = \LL$.
So, before anything, we have to find $\Theta_\alpha(\m)$ on $N$, analogues to the above
maps, for the case of a quantum action $\alpha$ and arbitrary $\m\in\mcb$.

\begin{theorem}\label{5}
Let $\m\in\mcb$, then there exists a unique completely bounded normal map $\Theta_\alpha(\m)$ on $N$ such that
\begin{equation}\label{a1}
\alpha\,\Theta_\alpha(\m)\, =\, (\m^*\otimes\iota)\,\alpha\,.
\end{equation}
Moreover, if $f\in\LO$ then we have $\Theta_\alpha(\m_f)= (f\otimes\id)\,\alpha$.
\end{theorem}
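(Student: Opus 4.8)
The plan is to construct $\Theta_\alpha(\m)$ by transporting $\m^*$ through the coaction and then inverting $\alpha$ on the resulting range. First I would record the dual form of the centralizer identity. Since the convolution satisfies $\langle f\star g,x\rangle=(f\otimes g)\Gamma(x)$, dualizing $\m(f\star g)=\m(f)\star g$ against all $f,g\in\LO$ and using that (the algebraic tensor product) $\LO\otimes\LO$ separates the points of $\LL\vtp\LL$ yields the single structural identity
\[
\Gamma\,\m^*=(\m^*\otimes\iota)\,\Gamma .
\]
In particular, for $\m=\m_f$ one computes $\m_f^*=(f\otimes\iota)\Gamma$, which will feed the moreover part.

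Next I would set $\beta:=(\m^*\otimes\iota)\,\alpha:N\to\LL\vtp N$, which is normal and completely bounded because $\m^*$ is. The whole theorem then reduces to the range inclusion $\beta(N)\subseteq\alpha(N)$: granting it, $\alpha$ is an injective normal $*$-homomorphism onto the von Neumann algebra $\alpha(N)$, so $\alpha^{-1}$ is normal there, and I define $\Theta_\alpha(\m):=\alpha^{-1}\circ\beta$. By construction $\alpha\,\Theta_\alpha(\m)=\beta=(\m^*\otimes\iota)\alpha$, which is exactly (\ref{a1}); it is normal and completely bounded as a composite of such maps; and uniqueness is automatic, since injectivity of $\alpha$ forces any two solutions of (\ref{a1}) to agree.

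To see that $\beta$ really lands in $\alpha(N)$ I would use the intertwiner (fixed-point) description $\alpha(N)=\{\,y\in\LL\vtp N:(\Gamma\otimes\iota)(y)=(\iota\otimes\alpha)(y)\,\}$, whose inclusion $\subseteq$ is just the coaction identity. The reverse inclusion is the main obstacle: I would re-derive it from Vaes' implementation, using $\alpha(x)=U_\alpha(1\otimes x)U_\alpha^*$ together with the corepresentation property $(\Gamma\otimes\iota)U_\alpha=(U_\alpha)_{23}(U_\alpha)_{13}$ to identify the solution set of $(\Gamma\otimes\iota)y=(\iota\otimes\alpha)y$ with $\alpha(N)$ — the same mechanism that, for $N=\LL$ and $\alpha=\Gamma$, recovers $\Gamma(\LL)$ as the coassociativity invariants. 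This is the step where the implementing unitary and the quantum-group duality genuinely enter. Granting the description, checking that $\beta$ satisfies the defining relation is a direct chain:
\[
(\Gamma\otimes\iota)\beta=(\Gamma\m^*\otimes\iota)\alpha=\big((\m^*\otimes\iota)\Gamma\otimes\iota\big)\alpha=(\m^*\otimes\iota\otimes\iota)(\Gamma\otimes\iota)\alpha,
\]
after which $(\Gamma\otimes\iota)\alpha=(\iota\otimes\alpha)\alpha$ turns the right-hand side into $(\m^*\otimes\iota\otimes\iota)(\iota\otimes\alpha)\alpha=(\iota\otimes\alpha)(\m^*\otimes\iota)\alpha=(\iota\otimes\alpha)\beta$, the penultimate equality being that $\m^*$ (on the first leg) commutes with $\alpha$ (on the last legs). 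Hence $\beta(N)\subseteq\alpha(N)$.

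Finally, for the moreover statement I would feed $\m=\m_f$, i.e.\ $\m_f^*=(f\otimes\iota)\Gamma$, into (\ref{a1}): using $(\Gamma\otimes\iota)\alpha=(\iota\otimes\alpha)\alpha$ one verifies $\alpha\circ(f\otimes\id)\alpha=(\m_f^*\otimes\iota)\alpha$, so $(f\otimes\id)\alpha$ solves (\ref{a1}) for $\m_f$, and the uniqueness already established gives $\Theta_\alpha(\m_f)=(f\otimes\id)\alpha$. I expect every algebraic manipulation here to be routine once the fixed-point description of $\alpha(N)$ is available; that description — equivalently, the range inclusion $\beta(N)\subseteq\alpha(N)$ — is the only substantial point of the argument.
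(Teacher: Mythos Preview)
Your proposal is correct and follows essentially the same approach as the paper: show $(\m^*\otimes\iota)\alpha(N)\subseteq\alpha(N)$ via the characterization $\alpha(N)=\{y\in\LL\vtp N:(\Gamma\otimes\iota)y=(\iota\otimes\alpha)y\}$, then set $\Theta_\alpha(\m)=\alpha^{-1}(\m^*\otimes\iota)\alpha$. The only differences are cosmetic: the paper simply cites \cite[Theorem~2.7]{V} for the fixed-point description of $\alpha(N)$ rather than re-deriving it from the unitary implementation, and it leaves the identity $\Gamma\,\m^*=(\m^*\otimes\iota)\Gamma$ implicit in the first line of its computation rather than isolating it as you do.
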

\begin{proof}
Let $x\in N$.
Then we have
\begin{eqnarray*}
(\Gamma\otimes\iota)\,((\m^*\otimes\iota)\,\alpha(x))
&=& (\m^*\otimes\iota\otimes\iota)\,((\Gamma\otimes\iota)\,\alpha(x))\\
&=& (\m^*\otimes\iota\otimes\iota)\,((\iota\otimes\alpha)\,\alpha(x))\\
&=& (\iota\otimes\alpha)\,(\m^*\otimes\iota)\,\alpha(x))\,.
\end{eqnarray*}
Hence, by \cite[Theorem 2.7]{V}, we get
$$(\m^*\otimes\iota)\,\alpha(x)\,\in\, \alpha(N)\,.$$
Therefore we can define a map $\Theta_\alpha(\m) : N \ra N$ by
\begin{equation}\label{1213}
\Theta_\alpha(\m)\, =\, \alpha^{-1}\,(\m^*\otimes\iota)\,\alpha\,.
\end{equation}
Since $\alpha$ is an $*$-isomorphism and $\m^*$ is a completely bounded normal map,
we conclude that $\Theta_\alpha(\m)$ is a completely bounded normal map on $N$.
Uniqueness follows from (\ref{a1}), which then implies the last part of the theorem
since for $f\in\LO$ we have $\alpha((f\otimes\id)\alpha) = (\m_f^*\otimes\iota)\,\alpha\,.$
\end{proof}

\begin{proposition}
Suppose that $\alpha$ is faithful. If
$$\alpha\,\Phi\, =\, (\Psi\otimes\iota)\,\alpha$$
for some completely bounded normal maps $\Phi: N\ra N$ and $\Psi: \LL\ra \LL$,
then there exists $\m\in \mcb$ such that $\Psi=\m^*$
and $\Phi=\Theta_\alpha(\m)$.
\end{proposition}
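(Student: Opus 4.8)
The plan is to recognize the statement as a converse to Theorem \ref{5} and to reduce it to the intertwining characterization of adjoints of left centralizers, after which the uniqueness clause of Theorem \ref{5} finishes the job. First I would record the algebraic fact dual to the definition of a left centralizer: a normal completely bounded map $\Psi$ on $\LL$ is of the form $\m^*$ for some $\m\in\mcb$ if and only if
\[
\Gamma\Psi \,=\, (\Psi\otimes\iota)\,\Gamma .
\]
Indeed, dualizing $\m(f\star g)=\m(f)\star g$, using $(f\star g)(b)=(f\otimes g)(\Gamma(b))$, and invoking the fact that $\LO$ separates points of $\LL$ yields exactly this identity for $\Psi=\m^*$; conversely, given such a $\Psi$, its pre-adjoint $\m:=\Psi_*$ maps $\LO$ into $\LO$, is completely bounded, and is a left centralizer. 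Thus the whole problem is to show that the given $\Psi$ satisfies this intertwining relation.

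The core computation uses only the hypothesis $\alpha\Phi=(\Psi\otimes\iota)\alpha$ and the coaction identity $(\Gamma\otimes\iota)\alpha=(\iota\otimes\alpha)\alpha$. Composing $\Gamma\otimes\iota$ on the left of the hypothesis and substituting repeatedly, I would compute
\[
(\Gamma\Psi\otimes\iota)\,\alpha
=(\Gamma\otimes\iota)(\Psi\otimes\iota)\,\alpha
=(\Gamma\otimes\iota)\,\alpha\Phi
=(\iota\otimes\alpha)\,\alpha\Phi
=(\iota\otimes\alpha)(\Psi\otimes\iota)\,\alpha .
\]
Then, commuting the $\Psi$-leg past $\alpha$, i.e. $(\iota\otimes\alpha)(\Psi\otimes\iota)=(\Psi\otimes\iota\otimes\iota)(\iota\otimes\alpha)$, and applying the coaction identity once more,
\[
(\iota\otimes\alpha)(\Psi\otimes\iota)\,\alpha
=(\Psi\otimes\iota\otimes\iota)(\iota\otimes\alpha)\,\alpha
=(\Psi\otimes\iota\otimes\iota)(\Gamma\otimes\iota)\,\alpha
=\big((\Psi\otimes\iota)\Gamma\otimes\iota\big)\,\alpha .
\]
Setting $R:=\Gamma\Psi-(\Psi\otimes\iota)\Gamma:\LL\to\LL\vtp\LL$, the two displays say precisely that $(R\otimes\iota)\,\alpha(x)=0$ for every $x\in N$. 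The delicate points here are purely bookkeeping: tracking which leg each map acts on and verifying the two commutation relations, of $\Psi$ with $\alpha$ and of $\Psi$ with $\Gamma$.

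Finally I would feed in faithfulness. Slicing the last leg of $(R\otimes\iota)\alpha(x)=0$ by an arbitrary $\omega\in N_*$ gives $R\big((\iota\otimes\omega)\alpha(x)\big)=0$ for all $\omega\in N_*$ and $x\in N$. Since $R$ is normal, being built from the normal maps $\Gamma$ and $\Psi$, and since by faithfulness of $\alpha$ the elements $(\iota\otimes\omega)\alpha(x)$ span a weak$^*$-dense subspace of $\LL$, it follows that $R=0$; that is, $\Gamma\Psi=(\Psi\otimes\iota)\Gamma$. By the first paragraph, $\m:=\Psi_*\in\mcb$ and $\Psi=\m^*$. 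The hypothesis then reads $\alpha\Phi=(\m^*\otimes\iota)\alpha$, which is exactly the defining relation (\ref{a1}) for $\Theta_\alpha(\m)$, so the uniqueness clause of Theorem \ref{5} forces $\Phi=\Theta_\alpha(\m)$. I expect the only genuine obstacle to be this last reduction, namely correctly invoking faithfulness together with normality of $R$ to pass from $(R\otimes\iota)\alpha=0$ to $R=0$, since the intervening manipulations are formal.
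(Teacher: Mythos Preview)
Your proof is correct and follows essentially the same route as the paper: both derive $(\Gamma\Psi\otimes\iota)\alpha=((\Psi\otimes\iota)\Gamma\otimes\iota)\alpha$ via the coaction identity and the hypothesis, invoke faithfulness to conclude $\Gamma\Psi=(\Psi\otimes\iota)\Gamma$, and then identify $\Psi=\m^*$ and $\Phi=\Theta_\alpha(\m)$. Your write-up is a bit more explicit than the paper's in justifying the dual characterization of $\m^*$ and in spelling out the slicing-plus-normality argument behind the faithfulness step, but the underlying argument is the same.
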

\begin{proof}
We have
\begin{eqnarray*}
(\Gamma\,\Psi\otimes\iota)\,\alpha &=& (\Gamma\otimes\iota)\,\alpha\,\Phi
\,=\, (\iota\otimes\alpha)\,\alpha\,\Phi
\,=\, (\iota\otimes\alpha)\,((\Psi\otimes\iota)\,\alpha) \\
&=& (\Psi\otimes\iota\otimes\iota)\,((\iota\otimes\alpha)\,\alpha)
\,=\, (\Psi\otimes\iota\otimes\iota)\,(\Gamma\otimes\iota)\,\alpha\\
&=& ((\Psi\otimes\iota)\Gamma\,\otimes\,\iota)\,\alpha\,,
\end{eqnarray*}
which implies, since $\alpha$ is faithful, that
$$\Gamma\,\Psi \,=\, (\Psi\otimes\iota)\Gamma.$$
So, there exists $\m\in\mcb$ such that $\Psi = \m^*$.
Moreover, we have
\[
\alpha\,\Theta_\alpha(\m)=(\m^*\otimes\iota)\,\alpha \,=\,
(\Psi\otimes\iota)\,\alpha \,=\, \alpha\,\Phi\,.
\]
Since $\alpha$ is injective, it follows that $\Phi = \Theta_\alpha(\m)$.
\end{proof}

The following theorem is our main result, which generalizes
the main representation theorem of \cite{JNR}.
Following their notation, if $X , Y\subseteq \B(H)$, we denote by $\mathcal {CB}^{\sigma, X}_Y(\B(H))$
the algebra of all normal completely bounded $Y$-bimodule maps $\Phi$ on
$\B(H)$ that leave $X$ invariant.


\begin{theorem}\label{00010}
Let $\alpha$ be a left action of a locally compact quantum group $\G$
on a von Neumann algebra $N$. Then there exists a completely contractive anti-homomorphism
$$\tilde\Theta_\alpha : \mcb \rightarrow \mathcal{CB}^{\sigma,N}_{\hat N'}(B(K))$$
such that $\tilde\Theta_\alpha(\m)$ extends $\Theta_\alpha(\m)$, for every $\m\in\mcb$.
Moreover, If $\alpha$ is strongly faithful, the map $\tilde\Theta_\alpha$
is completely isometric.
\end{theorem}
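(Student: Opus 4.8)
The plan is to define $\tilde\Theta_\alpha(\m)$ on all of $B(K)$ by the same formula that produced $\Theta_\alpha(\m)$ in Theorem \ref{5}, but with the extended action $\tilde\alpha$ in place of $\alpha$. Since $\tilde\alpha$ is an action of $\G$ on $B(K)$, it is coassociative, $(\Gamma\otimes\iota)\,\tilde\alpha=(\iota\otimes\tilde\alpha)\,\tilde\alpha$, which is immediate from the corepresentation property (\ref{4}) of $U_\alpha$. Repeating the computation of Theorem \ref{5} verbatim, with $\alpha$ replaced by $\tilde\alpha$ and $x\in N$ by an arbitrary $T\in B(K)$, gives $(\Gamma\otimes\iota)\big((\m^*\otimes\iota)\,\tilde\alpha(T)\big)=(\iota\otimes\tilde\alpha)\big((\m^*\otimes\iota)\,\tilde\alpha(T)\big)$, so \cite[Theorem 2.7]{V}, now applied to $\tilde\alpha$, yields $(\m^*\otimes\iota)\,\tilde\alpha(T)\in\tilde\alpha(B(K))$. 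This lets me set
\[
\tilde\Theta_\alpha(\m)\,=\,\tilde\alpha^{-1}\,(\m^*\otimes\iota)\,\tilde\alpha\,,
\]
a completely bounded normal map on $B(K)$, because $\tilde\alpha$ is a normal $*$-isomorphism onto its range and $\m^*$ is completely bounded and normal. As $\tilde\alpha|_N=\alpha$ and, by Theorem \ref{5}, $(\m^*\otimes\iota)\,\alpha(x)\in\alpha(N)$, the map $\tilde\Theta_\alpha(\m)$ leaves $N$ invariant and restricts there to $\Theta_\alpha(\m)$, so it is the required extension.

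I would then check the three structural properties. Complete contractivity is immediate: $\tilde\alpha$ and $\tilde\alpha^{-1}$ are completely isometric and $\|\m^*\otimes\iota\|_{cb}=\|\m^*\|_{cb}=\|\m\|_{cb}$, whence $\|\tilde\Theta_\alpha(\m)\|_{cb}\le\|\m\|_{cb}$, and the same estimate holds matricially since $\tilde\Theta_\alpha$ is built by conjugating by the fixed complete isometries $\tilde\alpha,\tilde\alpha^{-1}$. For the anti-homomorphism property one uses $(\m_1\m_2)^*=\m_2^*\,\m_1^*$ together with the fact that $(\m_1^*\otimes\iota)\,\tilde\alpha(T)$ lies in the range of $\tilde\alpha$, so that $\tilde\alpha\,\tilde\alpha^{-1}$ acts as the identity there; a short computation then gives $\tilde\Theta_\alpha(\m_1\m_2)=\tilde\Theta_\alpha(\m_2)\,\tilde\Theta_\alpha(\m_1)$. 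For the bimodule property I invoke Proposition \ref{3}, which gives $\hat N'=B(K)^{\tilde\alpha}$: if $a,b\in\hat N'$ then $\tilde\alpha(a)=1\otimes a$ and $\tilde\alpha(b)=1\otimes b$, hence $\tilde\alpha(aTb)=(1\otimes a)\,\tilde\alpha(T)\,(1\otimes b)$, and since $\m^*\otimes\iota$ is a $1\otimes B(K)$-bimodule map, applying $\tilde\alpha^{-1}$ yields $\tilde\Theta_\alpha(\m)(aTb)=a\,\tilde\Theta_\alpha(\m)(T)\,b$. Together with the invariance of $N$ this shows $\tilde\Theta_\alpha(\m)\in\mathcal{CB}^{\sigma,N}_{\hat N'}(B(K))$ and that $\tilde\Theta_\alpha$ is a completely contractive anti-homomorphism.

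It remains to prove the complete isometry when $\alpha$ is strongly faithful; since $\|\tilde\Theta_\alpha(\m)\|_{cb}\le\|\m\|_{cb}$ is in hand, I must show $\|\m\|_{cb}\le\|\tilde\Theta_\alpha(\m)\|_{cb}$. As $\tilde\Theta_\alpha(\m)$ restricts to $\Theta_\alpha(\m)$ on the invariant subalgebra $N$, so that $\|\Theta_\alpha(\m)\|_{cb}\le\|\tilde\Theta_\alpha(\m)\|_{cb}$, it suffices to bound $\|\m\|_{cb}=\|\m^*\|_{cb}$ by $\|\Theta_\alpha(\m)\|_{cb}$. The engine is the slice identity read off from (\ref{a1}): writing $\mathcal E_\omega(x)=(\iota\otimes\omega)\,\alpha(x)$ for $\omega\in N_*$, one gets $\m^*\,\mathcal E_\omega=\mathcal E_\omega\,\Theta_\alpha(\m)$, together with the estimate $\|\m^*\big((\iota\otimes\omega)\alpha(x)\big)\|\le\|\Theta_\alpha(\m)\|\,\|\omega\|\,\|x\|$. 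Strong faithfulness says precisely that the elements $(\iota\otimes\omega)\,\alpha(x)$ with $\|\omega\|=\|x\|=1$ are weak$^*$ dense in the unit ball of $\LL$; combining this with the normality (hence weak$^*$ continuity) of $\m^*$ and the weak$^*$ lower semicontinuity of the norm forces $\|\m^*\|\le\|\Theta_\alpha(\m)\|$.

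The main obstacle is upgrading this last step from the norm to the completely bounded norm. For that I expect to need a matricial version of strong faithfulness — the weak$^*$ density in the unit ball of $M_n(\LL)$ of the elements $(\iota_{M_n}\otimes\mathcal E_\omega)(X)$ with $X\in M_n(N)$ and $\|\omega\|,\|X\|\le 1$ — which I would isolate as a separate lemma and deduce from the scalar statement. Granting it, the amplified slice identity $(\iota_{M_n}\otimes\m^*)\circ(\iota_{M_n}\otimes\mathcal E_\omega)=(\iota_{M_n}\otimes\mathcal E_\omega)\circ(\iota_{M_n}\otimes\Theta_\alpha(\m))$ and the same lower-semicontinuity argument at each matrix level give $\|\m^*\|_{cb}\le\|\Theta_\alpha(\m)\|_{cb}\le\|\tilde\Theta_\alpha(\m)\|_{cb}$, and with the reverse inequality already established, $\tilde\Theta_\alpha$ is completely isometric. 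I anticipate this matricial density, rather than any of the algebraic verifications, to be the technical heart of the argument.
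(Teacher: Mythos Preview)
Your proposal is correct and follows essentially the same route as the paper: define $\tilde\Theta_\alpha(\m)=\Theta_{\tilde\alpha}(\m)$ by applying Theorem~\ref{5} to the extended action $\tilde\alpha$, verify invariance of $N$, the $\hat N'$-bimodule property via Proposition~\ref{3}, and the anti-homomorphism and complete contractivity exactly as you do, then deduce the isometry from strong faithfulness through the slice identity $\m^*\,\mathcal E_\omega=\mathcal E_\omega\,\Theta_\alpha(\m)$. The only difference is cosmetic: where you isolate the matricial density needed for the cb upgrade as a separate lemma, the paper dispatches it with the single word ``similarly'' and provides no further detail.
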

\begin{proof}
By Theorem \ref{5} (considered for the action $\tilde{\alpha}$),
for every $\m\in\mcb$ there exists a normal map
$\tilde\Theta_{\alpha}(\m) := \Theta_{\tilde\alpha}(\m)$ on $B(K)$ such that
\[
\tilde\alpha\,\tilde\Theta_{\alpha}(\m)\, =\, (\m^*\otimes\iota)\,\tilde\alpha\,.
\]
Then, for $y\in N$ we obtain
\begin{eqnarray*}
\tilde\alpha\,\tilde\Theta_{\alpha}(\m)\,(y) \,=\, (\m^*\otimes\iota)\,\tilde\alpha(y)
\,=\, (\m^*\otimes\iota)\,\alpha(y)
\,=\, \alpha\,\Theta_{\alpha}(\m)\,(y)\,,
\end{eqnarray*}
which shows that $\tilde\Theta_{\alpha}(\m)(N)\subseteq N$ and
\[
\tilde\Theta_{\alpha}(\m)\,(y)\, =\, \Theta_{\alpha}(\m)\,(y)
\]
for all $y\in N$.
Next, let $z\in B(K)$ and $\hat y\in\hat N' = B(K)^{\tilde\alpha}$, then we have
\begin{eqnarray*}
\tilde\alpha(\tilde\Theta_{\alpha}(\m)(z \hat y)) &=& (\m^*\otimes\iota)\,\tilde\alpha(z\hat y)
\,=\, (\m^*\otimes\iota)\,(\tilde\alpha(z)(1\otimes \hat y))\\
&=& ((\m^*\otimes\iota)\tilde\alpha(z))\,(1\otimes \hat y)
\,=\, \tilde\alpha\,(\tilde\Theta_{\alpha}(\m)(z)\,\hat y)\,.
\end{eqnarray*}
Since $\tilde\alpha$ is injective, this implies
$
\tilde\Theta_{\alpha}(\m)(z \hat y) = \tilde\Theta_{\alpha}(\m)(z)\, \hat y.
$
Similarly we obtain
\[
\tilde\Theta_{\alpha}(\m)(\hat y z)\, =\, \hat y\,\tilde\Theta_{\alpha}(\m)(z)\,.
\]
Hence, for all $\m\in\mcb$ we obtain
\[
\tilde\Theta_\alpha(\m)\,\in\, \mathcal{CB}^{\sigma,N}_{\hat N'}(B(K))\,.
\]
It is obvious from (\ref{1213}) that $\tilde\Theta_\alpha$ is linear. Moreover,
let $\m, \n \in \mcb$, then we have
\begin{eqnarray*}
\alpha\,(\tilde\Theta_\alpha(\m\star \n)) &=& ((\m\star\n)^*\,\otimes\iota)\,\alpha 
\,=\, (\n^*\otimes\iota)\,(\m^*\otimes\iota)\,\alpha \\
&=& (\n^*\otimes\iota)\,\alpha(\tilde\Theta_\alpha(\m))
\,=\, \alpha\,(\tilde\Theta_\alpha(\n)\,\tilde\Theta_\alpha(\m))\,.
\end{eqnarray*}
So $\tilde\Theta_\alpha(\m\star\n) = \tilde\Theta_\alpha(\n)\,\tilde\Theta_\alpha(\m)$, by injectivity of $\alpha$.

It is seen from (\ref{1213}), that $\Theta_\alpha$ and $\tilde\Theta_\alpha$ are completely contractive maps.
Now, suppose that $\alpha$ is strongly faithful, then for $\m\in\mcb$ we obtain
\begin{eqnarray*}
\|\,\Theta_\alpha(\m)\,\|
&=&
\sup\,\{\,\|\Theta_\alpha(\m)(x)\|\,:\,x\in (N)_1\,\}\\
&=&
\sup\,\{\,\|\alpha\,\Theta_\alpha(\m)(x)\|\,:\,x\in (N)_1\,\}\\
&=&
\sup\,\{\,\|(\m^*\otimes\id)\,\alpha(x)\|\,:\,x\in (N)_1\,\}\\
&\geq&
\sup\,\{\,|\,(f\otimes\om)\,(\m^*\otimes\id)\,\alpha(x)\,|\,:\,f\in\LO_1,\om\in (N_*)_1,x\in (N)_1\,\}\\
&=&
\sup\,\{\,|\,(\m(f)\otimes\om)\,\alpha(x)\,|\,:\,f\in\LO_1,\om\in (N_*)_1,x\in (N)_1\,\}\\
&=&
\sup\,\{\,\|(\m(f)\|\,:\,f\in\LO_1\,\}\\
&=&
\|\,\m\,\|\,,
\end{eqnarray*}
which shows that $\Theta_\alpha$ is an isometry.
In fact, similarly we can see that $\Theta_\alpha$ is completely isometric.
Then, since $\tilde\Theta_\alpha(\m)$
extends $\Theta_\alpha(\m)$ for every $\m\in\mcb$, we conclude that $\tilde\Theta_\alpha$
is completely isometric.
\end{proof}

\begin{remark}
\emph{The representation map in \cite{JNR} was also proved to be onto. But this is not the case in
the setting of left actions, as the following example shows.
It seems that the surjectivity, results from the fact that $\G$
acts on itself from both left and right, and that left centralizers
are the commutant of the right centralizers, and vice versa.
This is also suggested by the following example.}
\end{remark}
\noindent
\begin{example}
\emph{Consider the action of the additive group of integers $(\mathbb Z\,,\,+)$
on the unit circle $\mathbb T$ by irrational rotations. This
action is known to be ergodic.
For $h\in L^\infty(\mathbb{T})$ and $f\in L^1(\mathbb{T})$ we have
\begin{equation}\label{a2}
(\id \otimes f)\,\alpha(h) (n)\,=\, \int_{\mathbb{T}}\,f(s)\ \alpha_{n}^{-1}(h)(s)\ d s\,.
\end{equation}
Now consider, for example, the Dirac function
$\delta_0\in l^\infty(\mathbb Z)$, and suppose that $\mathcal{F}\subset\mathbb{Z}$
is a finite subset not including $0$. Then choose a closed interval $I$ around $1\in\mathbb{T}$
such that $I \cap \alpha_n^{-1}I = \emptyset$ for all $n\in \mathcal{F}$.
Denote by $\chi_I$ the characteristic function of $I$,
then for $\displaystyle f=\frac{1}{length(I)}\,\chi_I$ and $h=\chi_I$ in
(\ref{a2}) we get $(\id \otimes f)\,\alpha(h) (0) = 1$,
and $(\id \otimes f)\,\alpha(h) (n) = 0$ for all $n\in\mathcal{F}$.
This shows that we can choose a sequence $(f_k)\subset L^1(\mathbb T)$ and
$(h_k)\subset L^\infty(\mathbb T)$ such that $(\id \otimes f_k)\,\alpha(h_k)\to \delta_0$
in weak* topology of $l^\infty(\mathbb Z)$.
Similarly, we can approximate any finite support function in $l^\infty(\mathbb Z)$,
whence any function in $c_0(\mathbb Z)$, whose norm is bounded by 1,
by a sequence of the form $(\id \otimes f_k)\,\alpha(h_k)$,
with $(f_k)$ and $(h_k)$ sequences in unit balls of $L^1(\mathbb T)$ and $L^\infty(\mathbb T)$, respectively.
Since the unit ball of $c_0(\mathbb Z)$ is weak* dense in the unit ball of $l^\infty(\mathbb Z)$,
it follows that the action $\alpha$ is strongly faithful.}

\emph{Now, for the Dirac measure $\delta_{-1}$
concentrated on $-1\in \mathbb T$, consider the map
$$\tilde\Theta_\alpha(\delta_{-1})\,\in\,
\mathcal{CB}^{\sigma,\mathcal{L}^\infty(\mathbb T)}_{\mathcal{R}(\mathbb T)}(B(\mathcal{L}^2(\mathbb T)))\,.$$
Then, obviously there cannot be $\m\in\mathcal{C}_{cb}^l({l}^1(\mathbb Z)) = l^1(\mathbb Z)$ such that
$\tilde\Theta_\alpha(\delta_{-1}) = \tilde\Theta_\alpha(\m)$, since $\tilde\Theta_\alpha$ is injective \cite{JNR}.
Hence, $\tilde\Theta_\alpha$ is not onto.}
\end{example}

The following is a consequence of Theorem \ref{00010}.
\begin{corollary}\label{0004}
For every $z\in B(K)$ we have
\[
U_\alpha^*\,\big((\m^*\otimes\iota)\,\tilde\alpha(z)\big)\,U_\alpha\,\in 1\otimes B(K)\,.
\]
\end{corollary}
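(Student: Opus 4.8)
The plan is to deduce this directly from Theorem~\ref{5} applied to the extended action $\tilde\alpha$ on $B(K)$, in exactly the manner already exploited in the proof of Theorem~\ref{00010}. The essential observation is that, for fixed $\m\in\mcb$, the displayed element is nothing but $\tilde\Theta_\alpha(\m)(z)$ sitting inside $1\otimes B(K)$, once one undoes the spatial implementation by $U_\alpha$.

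First I would recall that, by construction, $\tilde\Theta_\alpha(\m)=\Theta_{\tilde\alpha}(\m)$ satisfies the intertwining identity furnished by Theorem~\ref{5}, namely
\[
(\m^*\otimes\iota)\,\tilde\alpha \,=\, \tilde\alpha\,\tilde\Theta_\alpha(\m)\,.
\]
Evaluating at a fixed $z\in B(K)$ gives $(\m^*\otimes\iota)\,\tilde\alpha(z)=\tilde\alpha\big(\tilde\Theta_\alpha(\m)(z)\big)$, so the left-hand side lies in the range of $\tilde\alpha$.

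Next I would invoke the spatial form of $\tilde\alpha$. Since $\tilde\alpha$ extends $\alpha$ via (\ref{1212}), we have $\tilde\alpha(w)=U_\alpha(1\otimes w)U_\alpha^*$ for every $w\in B(K)$. Applying this with $w=\tilde\Theta_\alpha(\m)(z)$ yields
\[
(\m^*\otimes\iota)\,\tilde\alpha(z) \,=\, U_\alpha\,\big(1\otimes\tilde\Theta_\alpha(\m)(z)\big)\,U_\alpha^*\,.
\]
Conjugating both sides by $U_\alpha$ and using its unitarity then cancels the implementing unitaries, leaving
\[
U_\alpha^*\,\big((\m^*\otimes\iota)\,\tilde\alpha(z)\big)\,U_\alpha \,=\, 1\otimes\tilde\Theta_\alpha(\m)(z)\,\in\,1\otimes B(K)\,,
\]
which is precisely the asserted conclusion.

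There is really no delicate step here; the one point that warrants care is confirming that Theorem~\ref{5} legitimately applies to $\tilde\alpha$ — that is, that $\tilde\alpha$ is a genuine action of $\G$ on $B(K)$, so that $(\m^*\otimes\iota)\,\tilde\alpha(z)$ indeed belongs to $\tilde\alpha(B(K))$. This was already established and used in the proof of Theorem~\ref{00010}, so given that input the corollary is an immediate consequence of the spatial implementation of $\tilde\alpha$ by $U_\alpha$.
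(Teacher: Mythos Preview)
Your proof is correct and is exactly the argument implicit in the paper: the corollary is stated there as a direct consequence of Theorem~\ref{00010}, and your proposal simply unpacks this by combining the intertwining relation $\tilde\alpha\,\tilde\Theta_\alpha(\m)=(\m^*\otimes\iota)\,\tilde\alpha$ from Theorem~\ref{5} (applied to $\tilde\alpha$, as in the proof of Theorem~\ref{00010}) with the spatial implementation $\tilde\alpha(w)=U_\alpha(1\otimes w)U_\alpha^*$.
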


\begin{remark}
\emph{In \cite{JNR}, similar result as Corollary \ref{0004} was proved directly,
then used to define the canonical extension map $\tilde\Theta$.}
\end{remark}

In the rest of the paper, we look at some of the main results that have been used in the proof
of the representation theorem of \cite{JNR}, and will prove analogue results in our setting.
\begin{proposition}
Define $\alpha' : N'\ra \LL\vtp N'$ by
\begin{equation}\label{1112}
\alpha'(x') \,:=\, (\hat J\otimes J_\theta)\,\alpha(J_\theta x' J_\theta)\,(\hat J\otimes J_\theta)\ \ (x'\in N')\,.
\end{equation}
Then $\alpha'$ defines an action of $\G$ on $N'$.
\end{proposition}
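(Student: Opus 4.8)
The plan is to verify the two defining properties of an action of $\G$ on $N'$: that $\alpha'$ is a normal $*$-isomorphism of $N'$ into $\LL\vtp N'$, and that it satisfies the coaction identity $(\Gamma\otimes\iota)\,\alpha' = (\iota\otimes\alpha')\,\alpha'$. For the structural part I would simply read off from (\ref{1112}) that $\alpha'$ is a composition of three maps. By Tomita--Takesaki theory $x'\mapsto J_\theta x' J_\theta$ is a conjugate-linear $*$-isomorphism of $N'$ onto $N$ (since $J_\theta N J_\theta = N'$); then $\alpha$ is a normal $*$-isomorphism of $N$ into $\LL\vtp N$; and finally $\mathrm{Ad}(\hat J\otimes J_\theta)$ is a conjugate-linear $*$-isomorphism. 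Because the unitary antipode satisfies $R(x)=\hat J x^*\hat J$ one has $\hat J\LL\hat J=\LL$, while $J_\theta N J_\theta = N'$, so $\mathrm{Ad}(\hat J\otimes J_\theta)$ carries $\LL\vtp N$ onto $\LL\vtp N'$. Composing the two conjugate-linear $*$-isomorphisms with the linear one yields a linear, normal, unital, injective $*$-homomorphism $\alpha':N'\to\LL\vtp N'$ (in particular $\alpha'(1)=1\otimes 1$), i.e. a normal $*$-isomorphism onto its range.

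The key reduction for the coaction identity is to eliminate the modular conjugations. Substituting $\alpha(J_\theta x' J_\theta)=U_\alpha(1\otimes J_\theta x' J_\theta)U_\alpha^*$ into (\ref{1112}) and using relation (\ref{00001}) together with $(\hat J\otimes J_\theta)^2=1$, I would pull the conjugations through $U_\alpha$. Since $(\hat J\otimes J_\theta)U_\alpha(\hat J\otimes J_\theta)=U_\alpha^*$ and $(\hat J\otimes J_\theta)(1\otimes J_\theta x' J_\theta)(\hat J\otimes J_\theta)=1\otimes x'$, this collapses to the clean implemented form $\alpha'(x')=U_\alpha^*\,(1\otimes x')\,U_\alpha$, on which the remaining axiom is most transparent.

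With this form in hand, both $(\Gamma\otimes\iota)\alpha'$ and $(\iota\otimes\alpha')\alpha'$ become words in the legs $(U_\alpha)_{13}$ and $(U_\alpha)_{23}$ on $\LT\otimes\LT\otimes K$, computed from the corepresentation property (\ref{4}), namely $(\Gamma\otimes\iota)U_\alpha=(U_\alpha)_{23}(U_\alpha)_{13}$, and its adjoint, exactly paralleling the verification that $\alpha$ itself is a coaction. The hard part will be this coassociativity step: matching the two three-fold leg expressions is not a formal consequence of the corepresentation identity alone, but hinges on the compatibility of the comultiplication with the modular conjugation, that is, on the unitary-antipode relation $\Gamma(\hat J y\hat J)=\sigma\,(\hat J\otimes\hat J)\,\Gamma(y)\,(\hat J\otimes\hat J)$ for $y\in\LL$. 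The flip $\sigma$ entering here is the crucial bookkeeping point, and one must track it carefully (equivalently, be attentive to whether the resulting coaction is recorded over $\G$ or its co-opposite) in order to land on the stated identity.

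In short, once the implemented form $\alpha'(x')=U_\alpha^*(1\otimes x')U_\alpha$ is established, the proof reduces to transporting the coassociativity of $\alpha$ through the conjugation by $\hat J\otimes J_\theta$ via the antipode relation above; I expect all the genuine difficulty to be concentrated in controlling the flip produced by $\hat J$, with the remaining $*$-isomorphism and normality assertions following routinely from Tomita--Takesaki theory and the fact that $\alpha$ is itself a normal $*$-isomorphism.
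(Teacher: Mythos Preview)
Your outline coincides with the paper's proof through the first two steps: the paper also records that $\alpha'$ is a $*$-isomorphism, then uses (\ref{00001}) to pull $(\hat J\otimes J_\theta)$ through $U_\alpha$ and arrive at the implemented form $\alpha'(x')=U_\alpha^*(1\otimes x')U_\alpha$, and finally checks the coaction identity by a leg-number computation from (\ref{4}).

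The divergence is in the third step. The paper does \emph{not} invoke the unitary-antipode relation $\Gamma(\hat Jy\hat J)=\sigma(\hat J\otimes\hat J)\Gamma(y)(\hat J\otimes\hat J)$ at all; once the implemented form is in hand it simply writes
\[
(\iota\otimes\alpha')\alpha'(x')=(U_\alpha^*)_{23}(U_\alpha^*)_{13}(1\otimes1\otimes x')(U_\alpha)_{13}(U_\alpha)_{23}
\]
and asserts this equals $(\Gamma\otimes\iota)(U_\alpha^*(1\otimes x')U_\alpha)$ directly from (\ref{4}). So the ``hard part'' you anticipate is absent from the paper's argument. Your instinct, however, is not misplaced: taking adjoints in (\ref{4}) gives $(\Gamma\otimes\iota)U_\alpha^*=(U_\alpha^*)_{13}(U_\alpha^*)_{23}$, so a literal computation of $(\Gamma\otimes\iota)\alpha'(x')$ yields $(U_\alpha^*)_{13}(U_\alpha^*)_{23}(1\otimes1\otimes x')(U_\alpha)_{23}(U_\alpha)_{13}$, which differs from the paper's displayed expression by exactly the flip $\sigma_{12}$ you flag. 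In other words, the implemented form together with (\ref{4}) as stated produces the coaction identity for $\sigma\Gamma$ rather than for $\Gamma$. Neither the paper's one-line verification nor the antipode relation you propose removes this flip on its own. So while your route and the paper's are the same, your caution about the coassociativity step is warranted, and the paper's middle equality deserves closer scrutiny.
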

\begin{proof}
We easily see that $\alpha'$ is a $*$-isomorphism. Moreover,
using (\ref{00001}) we have
\begin{eqnarray*}
\alpha'(x') &=&
(\hat J\otimes J_\theta)\,U_\alpha\,(1\otimes J_\theta x' J_\theta)\,U_\alpha^*\,(\hat J\otimes J_\theta)\\
&=& (\hat J\otimes J_\theta)\,U_\alpha\,(\hat J\otimes J_\theta)
(1\otimes x')(\hat J\otimes J_\theta)\,U_\alpha^*\,(\hat J\otimes J_\theta)\\
&=& U_\alpha^*\,(1\otimes x')\,U_\alpha\,.
\end{eqnarray*}
Hence, we get
\begin{eqnarray*}
(\iota\otimes\alpha')\,\alpha'(x') &=&
(U_\alpha^*)_{23}\,(U_\alpha^*)_{13}\,(1\otimes 1 \otimes x')\,(U_\alpha)_{13}\,({U_\alpha})_{23}\\
&=& (\Gamma\otimes\iota)\,(U_\alpha^*\,(1\otimes x')\,U_\alpha)
\,=\, (\Gamma\otimes\iota)\,\alpha'(x')\,.
\end{eqnarray*}
\end{proof}
From the above proof, we see that $\alpha'$ can be extended to an action $\tilde\alpha'$
of $\G$ on $B(K)$, defined as
\[
\tilde\alpha'(z)\,:=\,U_\alpha^*\,(1\otimes z)\,U_\alpha\ \ \ (z\in B(K))\,.
\]
(Note that $U_\alpha^*$ is not the canonical unitary implementation of $\alpha'$).
Then we can easily derive the following.
\begin{lemma}\label{6}
We have
\begin{enumerate}
\item
$(N')^{\alpha'} = J_\theta N^\alpha J_\theta$;
\item
$B(K)^{\tilde\alpha'} = J_\theta B(K)^{\tilde\alpha} J_\theta$.
\end{enumerate}
\end{lemma}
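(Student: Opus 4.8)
The plan is to reduce both statements to a single observation: conjugation by the antiunitary involution $\hat J \otimes J_\theta$ intertwines $\alpha'$ with $\alpha$ (respectively $\tilde\alpha'$ with $\tilde\alpha$) once the argument is twisted by $J_\theta$. First I would record the two elementary facts I use repeatedly: $\hat J \otimes J_\theta$ is a self-adjoint involution, so conjugation by it is an involutive $*$-operation, and
\[
(\hat J \otimes J_\theta)\,(1\otimes w)\,(\hat J \otimes J_\theta)\, =\, 1\otimes J_\theta w J_\theta \qquad (w\in B(K)),
\]
which just reflects that $\hat J$ fixes the scalar in the first leg. I will also use the standard modular fact $J_\theta N' J_\theta = N$.

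For part (1) I would simply invert the defining formula (\ref{1112}). Conjugating both sides by $\hat J \otimes J_\theta$ gives $\alpha(J_\theta x' J_\theta) = (\hat J \otimes J_\theta)\,\alpha'(x')\,(\hat J \otimes J_\theta)$ for $x'\in N'$. Hence $\alpha'(x') = 1\otimes x'$ holds if and only if $\alpha(J_\theta x' J_\theta) = (\hat J \otimes J_\theta)(1\otimes x')(\hat J \otimes J_\theta) = 1\otimes J_\theta x' J_\theta$, that is, if and only if $J_\theta x' J_\theta \in N^\alpha$ (note $J_\theta x' J_\theta \in N$ since $J_\theta N' J_\theta = N$). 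Applying $J_\theta (\cdot) J_\theta$ and using its involutivity then yields $(N')^{\alpha'} = J_\theta N^\alpha J_\theta$.

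For part (2) I first upgrade this covariance identity from $N'$ to all of $B(K)$. Using the commutation relation (\ref{00001}) exactly as in the preceding proof — which gives $(\hat J \otimes J_\theta)\,U_\alpha\,(\hat J \otimes J_\theta) = U_\alpha^*$ — I would verify that
\[
\tilde\alpha'(z)\, =\, U_\alpha^*\,(1\otimes z)\,U_\alpha\, =\, (\hat J \otimes J_\theta)\,\tilde\alpha(J_\theta z J_\theta)\,(\hat J \otimes J_\theta) \qquad (z\in B(K)).
\]
With this in hand the argument is identical to part (1): $\tilde\alpha'(z) = 1\otimes z$ is equivalent to $\tilde\alpha(J_\theta z J_\theta) = 1\otimes J_\theta z J_\theta$, i.e.\ to $J_\theta z J_\theta \in B(K)^{\tilde\alpha}$, whence $B(K)^{\tilde\alpha'} = J_\theta B(K)^{\tilde\alpha} J_\theta$.

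The only point requiring care, and the step I would treat as the main (if minor) obstacle, is the handling of the antiunitary $\hat J \otimes J_\theta$: I must use that it is a self-adjoint involution so that conjugation by it is a genuine $*$-operation, and I must invoke (\ref{00001}) correctly to convert $U_\alpha$ into $U_\alpha^*$ when passing between $\tilde\alpha$ and $\tilde\alpha'$ at the level of $B(K)$. Everything else is formal bookkeeping.
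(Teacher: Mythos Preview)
Your proposal is correct and follows precisely the argument the paper has in mind when it writes ``Then we can easily derive the following'': using the definition (\ref{1112}) together with the commutation relation (\ref{00001}) (equivalently, the implemented formulas $\tilde\alpha(z)=U_\alpha(1\otimes z)U_\alpha^*$ and $\tilde\alpha'(z)=U_\alpha^*(1\otimes z)U_\alpha$) to see that conjugation by $\hat J\otimes J_\theta$ intertwines the two actions after twisting by $J_\theta$, and then reading off the fixed-point identities. There is nothing to add.
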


If $R$ denotes the unitary antipode of a locally compact quantum group $\G$,
then we have
$
R(x)=\hat J\,x\,\hat J\,,
$
which is a result of the fact that $\hat J\,\LL\,\hat J\,=\,\LL$.
The following theorem gives a generalization of this fact, whence, a way to define
a `unitary antipode' on the `dual' of a $\G$-space.

\begin{theorem}\label{06}
We have
\[
J_\theta\, \hat N\, J_\theta\,=\, \hat N\,.
\]
\end{theorem}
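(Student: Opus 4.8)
The plan is to reduce the algebra identity to a statement about the generators $\lambda_\alpha(f) = (f\otimes\iota)(U_\alpha)$ of $\hat N$, and to exploit the symmetry relation (\ref{00001}). First I would record that, writing $W = \hat J\otimes J_\theta$, which is an antiunitary with $W^2 = 1$, relation (\ref{00001}) is equivalent to
\[
W\,U_\alpha\,W \,=\, U_\alpha^*\,.
\]
Because conjugation $z\mapsto J_\theta z J_\theta$ is a normal, antilinear, $*$-preserving automorphism of $B(K)$ satisfying $J_\theta M' J_\theta = (J_\theta M J_\theta)'$ for every von Neumann algebra $M$, it commutes with forming the bicommutant. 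Hence it suffices to show that $J_\theta\,\lambda_\alpha(f)\,J_\theta\in\hat N$ for every $f\in\LO$, and the theorem will then follow by applying the conjugation a second time and using $J_\theta^2 = 1$.

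For the key computation I would slice the relation above. Given $\om\in\LO$, define $\om'\in\LO$ by $\om'(a) = \overline{\om(\hat J a\hat J)}$; this is a well-defined normal linear functional precisely because $\hat J\,\LL\,\hat J = \LL$ (the fact recalled just before the theorem), and $\om\mapsto\om'$ is an involutive, hence bijective, linear map of $\LO$ onto itself. Slicing the first leg of $W U_\alpha W = U_\alpha^*$ against $\om$, and carefully tracking the two antilinear conjugations $\hat J$ and $J_\theta$ through the slice map, then yields
\[
J_\theta\,\lambda_\alpha(\om')\,J_\theta\,=\,(\om\otimes\iota)(U_\alpha^*)\,=\,\lambda_\alpha(\bar\om)^*\,,
\]
where $\bar\om(a) = \overline{\om(a^*)}$ and the last equality is the standard adjoint rule for slice maps. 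Since $\lambda_\alpha(\bar\om)\in\hat N$ and $\hat N$ is self-adjoint, the right-hand side lies in $\hat N$. As $\om$ ranges over $\LO$, the functional $\om'$ ranges over all of $\LO$, so $J_\theta\,\lambda_\alpha(f)\,J_\theta\in\hat N$ for every $f\in\LO$.

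Combining the two steps, $J_\theta\,\hat N\,J_\theta = J_\theta\{\lambda_\alpha(f):f\in\LO\}''J_\theta = \{J_\theta\lambda_\alpha(f)J_\theta : f\in\LO\}''\subseteq\hat N$, and applying $J_\theta(\cdot)J_\theta$ once more gives the reverse inclusion, whence $J_\theta\,\hat N\,J_\theta = \hat N$. I expect the only delicate point to be the middle computation: since both $\hat J$ and $J_\theta$ are antilinear, one must verify that the two complex conjugations combine so that $\om'$ is a genuinely linear, normal element of $\LO$, and that the sliced identity produces an honest slice of $U_\alpha^*$ rather than a merely conjugate-linear expression. Everything else is the formal manipulation of slice maps together with the elementary compatibility of conjugation by a modular conjugation with the double-commutant construction.
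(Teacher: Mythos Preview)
Your argument is correct and takes a genuinely different route from the paper. You work directly with the generators $\lambda_\alpha(f)$: slicing the symmetry relation $(\hat J\otimes J_\theta)U_\alpha(\hat J\otimes J_\theta)=U_\alpha^*$ in the first leg (after converting the two antilinearities into the linear twist $\om\mapsto\om'$) shows $J_\theta\lambda_\alpha(\om')J_\theta=\lambda_\alpha(\bar\om)^*\in\hat N$, and the bicommutant then finishes the job. The paper instead passes to the commutant: it identifies $\hat N'$ with the fixed-point algebra $B(K)^{\tilde\alpha}$ via Proposition~\ref{3}, uses Lemma~\ref{6} to get $J_\theta\,B(K)^{\tilde\alpha}\,J_\theta=B(K)^{\tilde\alpha'}$, recognises the latter as $\{(f\otimes\iota)U_\alpha^*:f\in\LO\}'=\hat N'$, and only then deduces $J_\theta\hat N J_\theta\subseteq\hat N$ by an elementary commutation. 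Your approach is more self-contained (it needs neither Lemma~\ref{6} nor Proposition~\ref{3}, only the relation~(\ref{00001}) and $\hat J\,\LL\,\hat J=\LL$), while the paper's approach makes the structural link between $\hat N'$ and the fixed-point algebras of $\tilde\alpha$ and $\tilde\alpha'$ explicit, which is of independent interest.
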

\begin{proof}
Similarly to the proof of Proposition \ref{3} we can show that
\begin{equation}\label{0001}
B(K)^{\tilde\alpha'}\, = \, \{\,(f\otimes \id)U_\alpha^*\,:\,f\in\LO\,\}^{'}\,.
\end{equation}
Now, the left hand side of (\ref{0001}) is equal to $J_\theta B(K)^{\tilde\alpha} J_\theta$
by Lemma \ref{6},
and the right hand side is equal to $\hat N'$ by the definition.
Hence, from Proposition \ref{3} we conclude
\[
J_\theta\, \hat N'\, J_\theta\, = \, J_\theta\, B(K)^{\tilde\alpha}\, J_\theta\,=\, \hat N'\,.
\]
Now, let $\hat x\in\hat N$, then for every $\hat y'\in\hat N'$ we obtain
\[
J_\theta\, \hat x\, J_\theta\,\hat y'\,=\,J_\theta\, \hat x\, J_\theta\,\hat y'\,J_\theta\,J_\theta\,=\,
J_\theta\, J_\theta\,\hat y'\,J_\theta\, \hat x\,J_\theta\,=\,
\hat y'\,J_\theta\, \hat x\,J_\theta\,,
\]
which implies $J_\theta\, \hat x\,J_\theta\,\in\hat N$.
\end{proof}

It is known for a locally compact quantum group $\G$ that
\begin{eqnarray*}
\langle\, \LL\,\LLL\,\rangle &=& \langle\, \LL\,\LLL'\,\rangle
\,=\,\langle\, \LL'\,\LLL\,\rangle \\ &=&\langle\, \LL'\,\LLL'\,\rangle \,=\, B(\LT)\,,
\end{eqnarray*}
which becomes a very powerful tool when dealing with normal maps on $B(\LT)$.
This result is also crucial in the proof of main results in \cite{JNR}.
In the following, we prove a generalization of this result.
\begin{theorem}\label{2}
If $\alpha$ is ergodic and co-faithful, then we have
\[
\langle \,N\,\hat N\,\rangle\, =\, B(K)\,,
\]
where $\langle N\hat N\rangle$ is the weak* closure of the linear span of the set
$\{\,y\,\hat y \,: \,y\in N, \hat y\in \hat N\,\}$.
\end{theorem}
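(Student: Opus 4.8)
The plan is to prove the two inclusions separately, with the reverse inclusion $\langle N \hat N\rangle \subseteq B(K)$ being trivial since both $N$ and $\hat N$ are subalgebras of $B(K)$. The substance lies in establishing that $\langle N \hat N\rangle = B(K)$, which by the bicommutant theorem is equivalent to showing that the commutant of $\langle N \hat N\rangle$ reduces to the scalars, i.e., $(N \cup \hat N)' = \mathbb{C}1$. Indeed, since $\langle N \hat N\rangle$ is a weak* closed subspace containing both $N$ and $\hat N$, a clean strategy is to compute $N' \cap \hat N'$ and show it is trivial; any operator commuting with every $y \in N$ and every $\hat y \in \hat N$ must be central in $B(K)$, hence scalar.

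First I would invoke Proposition \ref{3}, which identifies $\hat N' = B(K)^{\tilde\alpha}$, to translate the condition $z \in \hat N'$ into the fixed-point condition $\tilde\alpha(z) = 1 \otimes z$, equivalently $U_\alpha(1\otimes z)U_\alpha^* = 1 \otimes z$. Next I would use the condition $z \in N'$: since $\alpha(x) = U_\alpha(1\otimes x)U_\alpha^*$ for $x \in N$, commuting with $N$ should be exploited together with the ergodicity and co-faithfulness hypotheses. The key computation is to show that an element $z$ lying in both commutants, together with the corepresentation property \eqref{4} and the implementing relation \eqref{1212}, forces $\alpha(z') = 1 \otimes z$ for the relevant image, so that co-faithfulness of $\alpha$ (density of the span of $\{(f\otimes\iota)\alpha(x) : f \in \LO, x \in N\}$ in $N$) combined with ergodicity (triviality of $N^\alpha$) pins $z$ down.

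More precisely, I expect the argument to run as follows: for $z \in N' \cap \hat N'$, the fixed-point condition under $\tilde\alpha$ means $z \in B(K)^{\tilde\alpha}$, so by the extension $\tilde\alpha$ of $\alpha$ one checks that $z$ behaves compatibly with the $N$-bimodule structure; combining $z \in N'$ with the action relation should yield that $z$ is determined by a fixed point of $\alpha$ itself. Co-faithfulness is the tool that promotes the local commutation data $(f\otimes\iota)\alpha(x)$ back to a global statement about $N$, while ergodicity then collapses the resulting fixed-point algebra to $\mathbb{C}1$. The two hypotheses play dual roles: co-faithfulness guarantees enough elements of the form $(f\otimes\iota)\alpha(x)$ to detect $z$, and ergodicity removes the only remaining freedom.

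The main obstacle I anticipate is the bookkeeping in passing between the commutant conditions on $B(K)$ and the fixed-point conditions for $\alpha$ and $\tilde\alpha$, particularly making rigorous the step where commuting with all of $N$ (not merely with $\alpha(N)$ inside $\LL \vtp N$) is converted into a statement that can be fed into co-faithfulness. One must be careful that the slice maps $(f\otimes\iota)$ interact correctly with the commutation relation, and that the weak* density provided by co-faithfulness is strong enough to conclude $z \in \mathbb{C}1$ rather than merely $z \in N^\alpha$ or $z \in$ some intermediate algebra. Once the reduction to $(N \cup \hat N)' = \mathbb{C}1$ is set up correctly, the hypotheses should close the argument, but verifying that the density and ergodicity assumptions are exactly what is needed—and not, say, the stronger strong faithfulness used elsewhere—is the delicate part.
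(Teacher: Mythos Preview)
Your commutant computation is essentially the second half of the paper's argument, and it goes through using only ergodicity (via $\hat N' = B(K)^{\tilde\alpha'}$, then $N'\cap B(K)^{\tilde\alpha'}=(N')^{\alpha'}=J_\theta N^\alpha J_\theta=\mathbb C 1$). But there is a genuine gap in your reduction to the bicommutant theorem: $\langle N\hat N\rangle$ is defined as the weak* closure of the \emph{linear span} of products $y\hat y$, not as the von Neumann algebra generated by $N\cup\hat N$. Showing $(N\cup\hat N)'=\mathbb C1$ only yields $(N\cup\hat N)''=B(K)$; it does not, by itself, force the a priori smaller weak*-closed subspace $\langle N\hat N\rangle$ to be all of $B(K)$. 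You need to know that $\langle N\hat N\rangle$ is actually a $*$-algebra, hence equal to $(N\cup\hat N)''$, before the commutant computation finishes the job.

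This is exactly where co-faithfulness enters, and your proposal misplaces it. In the paper, co-faithfulness is not used in the commutant step at all; it is used to prove the product inclusion $\hat N N\subseteq \overline{\mathrm{span}}^{w^*} N\hat N$. Concretely, one computes $\lambda_\alpha(f)\cdot (g\otimes\iota)\alpha(y)$ using the corepresentation identity $(\Gamma\otimes\iota)U_\alpha=(U_\alpha)_{23}(U_\alpha)_{13}$ and shows the result lies in $N\hat N$; co-faithfulness (weak* density of the span of $\{(g\otimes\iota)\alpha(y)\}$ in $N$) then gives $\hat N N\subseteq \langle N\hat N\rangle$. From this, $\langle N\hat N\rangle$ is closed under multiplication and adjoints, hence is a von Neumann algebra with commutant $N'\cap\hat N'$, and now ergodicity alone finishes. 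Your outline would work once you insert this algebraic step and reassign the role of co-faithfulness accordingly.
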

\begin{proof} Let $f,g\in \LO$ and $y\in N$, using (\ref{4}) we have
\begin{eqnarray*}
\big((f\otimes\iota) \,U_\alpha\big)\,\big((g\otimes\iota)\,\alpha(y)\big) &=&
\big((f\otimes\iota)\,U_\alpha\big)\,\big((g\otimes\iota)(U_\alpha(1\otimes y)U_\alpha^*)\big)\\
&=& (g\otimes f\otimes\iota)\,\big((U_\alpha)_{23}\,(U_\alpha)_{13}\,(1\otimes 1\otimes y)\,(U_\alpha^*)_{13}\big)\\
&=& (g\otimes f\otimes\iota)\,\big(((\Gamma\otimes\iota)U_\alpha)\,(1\otimes 1\otimes y)\,
((\Gamma\otimes\iota)U_\alpha^*)(U_\alpha)_{23}\big)\\
&=& (g\otimes f\otimes\iota)\,\big(((\Gamma\otimes\iota)(U_\alpha(1\otimes y)U_\alpha^*))\,(U_\alpha)_{23}\big)\\
&=& (g\otimes f\otimes\iota)\,\big(((\Gamma\otimes\iota)\alpha(y))\,(U_\alpha)_{23}\big).
\end{eqnarray*}
We claim that the last term is in $N\hat N$. To see this, let $\hat y\in\hat N$ and $x\in \LL$, then
\begin{eqnarray*}
(g\otimes f\otimes\iota)\,\big(((\Gamma\otimes\id)\alpha(y))\,(1 \otimes x\otimes \hat y)\big)
&=& (g \star (x\,f)\otimes\id)\,(\alpha(y)\,(1\otimes\hat y))\\
&=& \big((g \star (x\,f)\otimes\iota)\,\alpha(y)\big)\,\hat y\, \in\, N\,\hat N.
\end{eqnarray*}
Since the span of
$$\{\,1\otimes x\otimes \hat y \,:\, \hat y\in\hat N,\, x\in \LL\,\}$$
is weakly dense in $\C\otimes \LL\vtp \hat N$, and $(U_\alpha)_{23}\in \C\otimes \LL\vtp \hat N$,
the claim follows.
Then, since $\alpha$ is co-faithful, it yields that
$\hat N N\subseteq N\hat N$, which implies that $\langle N\hat N\rangle$ is a von Neumann algebra.
Moreover, its commutant is $N'\cap(\hat N)'$. By proposition \ref{3} we have
$(\hat N)' = B(K)^{\tilde\alpha'}$, and since $\alpha$ is ergodic, by Lemma \ref{6} we have
\[
N'\cap(\hat N)' \,=\, N' \cap B(K)^{\tilde\alpha'}\, =\, (N')^{\alpha'}\, =\, J_\theta N^\alpha J_\theta \,=\, \C 1\,,
\]
which completes the proof.
\end{proof}

Combining Theorems \ref{06} and \ref{2}, we obtain the following.
\begin{corollary}
If $\alpha$ is ergodic and co-faithful, then we have
\[
\langle\, N'\,\hat N\,\rangle\, =\, B(K)\,.
\]
\end{corollary}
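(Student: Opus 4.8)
The plan is to obtain this identity from Theorem~\ref{2} by conjugating with the modular conjugation $J_\theta$ of $\theta$, which interchanges $N$ and $N'$ while fixing $\hat N$ setwise. Since $\theta$ is a normal semi-finite faithful weight and $K$ is its GNS space, $N$ acts on $K$ in standard form, so Tomita--Takesaki theory gives $J_\theta\,N\,J_\theta = N'$ (this is exactly the relation already used, in the form $J_\theta N' J_\theta = N$, in the construction of $\alpha'$). On the other hand, Theorem~\ref{06} supplies $J_\theta\,\hat N\,J_\theta = \hat N$. Consequently the conjugation $\rho(z):=J_\theta\,z\,J_\theta$ carries the generating set $\{\,y\,\hat y:y\in N,\ \hat y\in\hat N\,\}$ onto $\{\,y'\,\hat y:y'\in N',\ \hat y\in\hat N\,\}$, because $J_\theta\,y\,\hat y\,J_\theta = (J_\theta\,y\,J_\theta)(J_\theta\,\hat y\,J_\theta)$ (using $J_\theta^2=1$), and the two factors range over $N'$ and $\hat N$ respectively.

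First I would record the relevant properties of $\rho$. It is a conjugate-linear involution of $B(K)$ with $\rho(B(K))=J_\theta\,B(K)\,J_\theta=B(K)$, and it is weak${}^*$-continuous: indeed $\rho(z)=J_\theta(z^*)^*J_\theta$ is the composition of the adjoint operation with the linear $*$-anti-automorphism $w\mapsto J_\theta\,w^*\,J_\theta$ of $B(K)$, both of which are normal. Then, assuming $\alpha$ ergodic and co-faithful so that Theorem~\ref{2} applies, I would simply apply $\rho$ to the identity $\langle N\,\hat N\rangle = B(K)$ and use that $\rho$ carries weak${}^*$ closed linear spans to weak${}^*$ closed linear spans, obtaining
\[
\langle\, N'\,\hat N\,\rangle \,=\, \rho\big(\langle\, N\,\hat N\,\rangle\big)\,=\,\rho\big(B(K)\big)\,=\,B(K).
\]

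The only point needing care is the conjugate-linearity of $\rho$: it is not complex-linear, so a priori one might worry that it sends linear spans only to conjugate-linear spans. However, $\rho$ sends $\sum_i c_i\,z_i$ to $\sum_i \bar c_i\,\rho(z_i)$, and since the coefficients $\bar c_i$ again range over all of $\C$, the image of a linear span is a genuine linear span; combined with the weak${}^*$-continuity just noted, this gives $\rho(\langle S\rangle)=\langle\rho(S)\rangle$ for every $S\subseteq B(K)$, which is all that is required. This is the one (minor) subtlety in the argument; the remainder is the bookkeeping of the two algebraic identities $J_\theta\,N\,J_\theta=N'$ and $J_\theta\,\hat N\,J_\theta=\hat N$.
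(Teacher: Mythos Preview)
Your argument is correct and is essentially the same as the paper's: the corollary is obtained by combining Theorem~\ref{2} with Theorem~\ref{06}, conjugating the identity $\langle N\hat N\rangle=B(K)$ by $J_\theta$ to replace $N$ by $N'$ while leaving $\hat N$ invariant. Your extra remarks on the conjugate-linearity and weak${}^*$-continuity of $z\mapsto J_\theta z J_\theta$ are accurate and justify the passage of $\rho$ through the weak${}^*$ closed span, which the paper leaves implicit.
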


We note that the co-faithfulness condition assumed in Theorem \ref{2}
is a very weak condition. In fact, any action of any discrete group on a von Neumann algebra $N$
is co-faithful. This is clear since $(\delta_e\otimes\id)\alpha$ is the identity map
on $N$. In particular, for the action of integers on the unit circle
discussed above, all the ingredients for the proof of the representation theorem of
\cite{JNR} hold, which yet, prove to be insufficient for surjectivity.
Moreover, using Theorem \ref{00010} one can easily see that
any action of a discrete quantum group is co-faithful.

Also, boundary actions provide
(non-trivial) examples of quantum group actions $\alpha$ who satisfy conditions of Theorem \ref{2}.
Let $\mu\in\MG$ be a state, the (noncommutative) Poisson boundary of $\mu$ is defined as
\[
 \h_\mu\,:=\,\{\,x\in\LL\,:\, \mathfrak{m}_\mu^*(x)=x\,\},
\]
where $\mathfrak{m}_\mu$ is the convolution operator (see \ref{F.multi}).
Then $\h_\mu$ is a weak* closed operator system in $\LL$, and
moreover, it is a von Neumann algebra with the Choi--Effros product
(for more details we refer the reader to \cite{I} and \cite{KNR}).
Then the restriction of $\Gamma$ defines an action of $\G$ on $\h_\mu$.
This action is clearly ergodic since it is the restriction of $\Gamma$.
Moreover, if $\G$ is co-amenable, and $(e_i)$ is a bounded approximate identity
for $\LO$, then $(e_i\ot\id) \Gamma(x) \rightarrow x$ in the weak* topology for all $x\in\LL$,
and so it follows that in this case the action is also co-faithful.
%

\begin{remark}
\emph{As an application of Theorem \ref{00010}, in the finite dimensional case,
we can produce a class of quantum channels coming from actions of
quantum groups on finite dimensional von Neumann algebras.
It will be interesting to investigate the properties of these quantum channels,
similarly to \cite{JNR-QIT}.}
\end{remark}

\bibliographystyle{plain}

\begin{thebibliography}{99}





\bibitem {I} 
M. Izumi, \textit{Non-commutative Poisson boundaries and compact quantum group actions}, 
Adv. Math. \textbf{169} (2002), no. 1, 1--57. 


\bibitem {JNR} M. Junge, M. Neufang \and Z.-J. Ruan,
{\it A representation theorem for locally compact quantum groups.} Internat. J. Math.
\textbf{20} (2009), no.3 377--400.




\bibitem {JNR-QIT} M. Junge, M. Neufang \and Z.-J. Ruan,
{\it Applications of the representation theory of locally compact quantum groups in quantum information theory,}
preprint.


\bibitem {KNR} M. Kalantar, M. Neufang \and Z.-J. Ruan,
\textit{Poisson Boundaries over Locally Compact Quantum Groups},
to appear in Internat. J. Math.


\bibitem {KV} J. Kustermans \and S. Vaes,
{\it Locally compact quantum groups.} Ann. Sci. Ecole Norm. Sup.
\textbf{33} (2000),  837--934.



\bibitem {N} M. Neufang,
{\it Abstrakte harmonische Analyse und Modulhomomorphismen \"{u}ber von Neumann-Algebren,}
Dissertation zur Erlangung des Grades des Doktors der Naturwissenschaften,
Saarbr\"{u}cken, 2000.



\bibitem {NRS} M. Neufang, Z.-J. Ruan \and N. Spronk,
{\it  Completely isometric representations of $M\sb {\rm cb}A(G)$ and ${\rm UCB}(\hat G)^*$.}
Trans. Amer. Math. Soc.
\textbf{360} (2008), no. 3, 1133--1161.





\bibitem{V} S. Vaes,
{\it The unitary implementation of a locally compact quantum group action.} Journal of Functional Analysis
\textbf{180} (2001), 426--480.





\end{thebibliography}

\end{document}